\documentclass{amsart}

\usepackage[all]{xy}
\usepackage{hyperref}
\hypersetup{
    colorlinks=true,       
    linkcolor=blue,          
    citecolor=blue,        
    filecolor=blue,      
    urlcolor=blue           
}
\usepackage[backrefs,lite]{amsrefs}

\newcommand{\Pic}{\operatorname{Pic}}

\newcommand{\vdim}{\operatorname{vdim}}

\newcommand{\edim}{\operatorname{edim}}
\newcommand{\efdim}{\operatorname{efdim}}

\newcommand{\fdim}{\operatorname{fdim}}

\newcommand{\std}{\operatorname{std}}

\newcommand{\seep}{\operatorname{sp}}

\newcommand{\Bs}{\operatorname{Bs}}

\newcommand{\pp}{\mathbb{P}}

\newcommand{\zz}{\mathbb{Z}}
\newcommand{\nn}{\mathbb{N}}

\newcommand{\cc}{\mathbb{C}}
\newcommand{\kk}{\mathbb{K}}
\newcommand{\ls}{\mathcal{L}}

\newtheorem{theorem}{Theorem}[section]
\newtheorem{lemma}[theorem]{Lemma}
\newtheorem{proposition}[theorem]{Proposition}
\newtheorem{corollary}[theorem]{Corollary}
\newtheorem{conjecture}[theorem]{Conjecture}

\theoremstyle{definition}

\newtheorem{definition}[theorem]{Definition}
\newtheorem{example}[theorem]{Example}

\newtheorem{remark}[theorem]{Remark}

\theoremstyle{remark}

\numberwithin{equation}{section}

\usepackage[norelsize]{algorithm2e}

\begin{document}

\title{Linear Systems on the Blow-Up of $(\pp^1)^n$}
\author[A.~Laface]{Antonio Laface}
\address{
Departamento de Matem\'atica,
Universidad de Concepci\'on,
Casilla 160-C,
Concepci\'on, Chile}
\email{alaface@udec.cl}

\author[J.~Moraga]{Joaqu\'in Moraga}
\address{
Departamento de Matem\'atica,
Universidad de Concepci\'on,
Casilla 160-C,
Concepci\'on, Chile}
\email{joamoraga@udec.cl}

\subjclass[2010]{Primary 14C20, 
Secondary 14M25. 
}

\thanks{
The authors were partially supported 
by Proyecto FONDECYT Regular N. 1150732.
}

\maketitle

\medskip 

\begin{abstract}
In this note we study linear systems
on the blow-up of $(\pp^1)^n$ 
at $r$ points in very general position. 
We prove that the fibers of the projections 
$(\pp^1)^n \rightarrow (\pp^1)^s, 1\leq s \leq n-1$
can give contribution to the speciality of the linear system. 
This allows us to give a new definition
of expected dimension
of a linear system in $(\pp^1)^n$
which we call fiber dimension.
Finally, we state a conjecture
about linear systems on $(\pp^1)^3$.
\end{abstract}

\section*{Introduction}
An open problem in algebraic geometry
is that of determining the dimension of
a linear system of hypersurfaces of $\pp^n$
of a given degree passing through finitely 
many points in very general position
with prescribed multiplicities.
This problem is related to polynomial
interpolation in several variables
to the Waring problem for polynomials
and the classification of defective
higher secant varieties of Veronese
embeddings of projective spaces
~\cite[Sections 6 and 7]{C}.
In case $n=2$ the Segre-Harbourne-Gimigliano-Hirschowitz
conjecture~\cite{Gi,Ha,Hi,Se} predicts
the dimension of such linear systems. 
Several cases of this conjecture 
have been proved, see e.g.~\cite{C,CHMR,CM1,CM2,Le,Mig}.
In ~\cite{LU2,LU1} an analogous conjecture
is stated for $n=3$ and proved when the
multiplicities of the points are $\leq 5$ in
~\cite{BB,BBCS}. 
There is no such a conjecture for higher
values of $n$, but all the same there are 
partial results about the dimension of such linear 
systems~\cite{Dum,St} and
in ~\cite{BDP} the authors determine the
contribution to the dimension of linear
systems given by linear subspaces.

Inspired by~\cite{BDP} in this note 
we study linear systems of 
$(\pp^1)^n$ through multiple points
for $n\geq 2$.
Let $\ls:=\ls_{(d_1, \dots , d_n)}(m_1, \dots , m_r)$
be the linear system of hypersurfaces of degree $(d_1, \dots , d_n)$
in $(\pp^1)^n$ passing through a general 
union of $r$ points with multiplicities respectively $m_1, \dots m_r$.
The {\em virtual dimension} of $\ls$ is 
\[ \vdim (\ls) = \prod_{i=1}^n (d_i+1) - \sum_{i=1}^r \binom{ n+m_i-1}{n}-1\]
the {\em expected dimension} of $\ls$ is $\edim(\ls)=\max (\vdim(\ls),-1)$.
The dimension of $\ls$ is minimum when the points are in
very general position according to Remark~\ref{pivrp}.
The inequality $\dim (\ls)\geq \edim(\ls)$ always holds.
The conditions imposed by the points are 
linearly dependent if and only if  $\dim (\ls)> \edim(\ls)$,
in this case we say that $\ls$ is {\em special}.
Otherwise we say that $\ls$ is {\em non-special}.
Special linear systems have been 
classified when all the multiplicities are
$\leq 2$ in~\cite{CGG2,LP,La,VT}.

We prove that a fiber of a projection
map $(\pp^1)^n \rightarrow (\pp^1)^s$, where $1\leq s \leq n-1$,
through a multiple point can contribute to the speciality of the linear system
$\ls$. We introduce in Definition~\ref{def:fdim} the 
{\em fiber-expected dimension}
$\efdim(\ls)$ which satisfies the inequalities
(see Theorem~\ref{teo:fdim})
\[
 \dim(\ls)\geq\efdim(\ls)\geq\edim(\ls)
\]
and takes into account the speciality of $\ls$
coming from such fibers.
We say that $\ls$ is 
{\em fiber special}
if $\dim(\ls)>\fdim(\ls)$ and that it is 
{\em fiber non-special} otherwise.
In Theorem~\ref{fdim} we show that a 
linear system through two multiple points 
is fiber non-special.
If there are more than two multiple points
then there are examples of fiber special
systems (see Example~\ref{sp3}).
Linear systems through multiple points
of $(\pp^1)^n$ correspond to complete
linear systems on the blow-up $Y$ of
$(\pp^1)^n$ at those points.
We recall a quadratic form on $\Pic(Y)$
introduced in~\cite{Muk} and we study
the action of its Weyl group,
providing an algorithm for 
determining the element of minimal
total degree in any orbit of an effective
class. We say that such a class and the 
corresponding linear system are in 
{\em standard form}. For $n=3$
we state a conjecture about
special linear systems in standard
form.
Finally we recall a degeneration of $(\pp^1)^n$
into two copies of $(\pp^1)^n$ introduced in~\cite{LP}.
We relate the speciality of a linear system
with that of the two linear systems arising
from the degeneration.

The paper is organized as follows:
In Section~\ref{bs} we recall some definitions,
notations and introduce a small
modification between the blow-up of $\pp^n$
at $n+r-1$ points and the blow-up of $(\pp^1)^n$
at $r$ points. 
In Section~\ref{sbm} we
study the Weyl group of the Picard group 
of the blow-up of $(\pp^1)^n$ at points 
in very general position.
In Section~\ref{fss} we prove that
the fibers through the points can
contribute to the speciality of linear systems
and introduce the concept of fiber dimension. 
Section~\ref{dg} deals with a degeneration
of $(\pp^1)^n$ and the related algorithm.
Finally, in Section~\ref{ec}
we provide some examples
and state a conjecture for $(\pp^1)^3$.

\subsection*{Acknowledgements}
It is a pleasure to thank Elisa Postinghel
and Luca Ugaglia for several useful discussions.

\section{ Basic setup }\label{bs} 

In what follows we will denote by $\kk$
an algebraically closed field.
Given an algebraic variety $X$ we denote 
by $h^i(X,D)$ the dimension of the 
$i$-th cohomology group of any line bundle whose class is $D\in \Pic (X)$.

In this section we recall some definitions, 
notations and results about linear systems 
on $(\pp^1)^n$ and $\pp^n$.
First of all we denote by $\kk[x_1,y_1,\dots,x_n,y_n]$
the Cox ring of $(\pp^1)^n$
and by $\kk[x_0,\dots,x_n]$
the Cox ring of $\pp^n$.
Let $\pi_Y\colon Y \to (\pp^1)^n$
( resp. $\pi_X\colon X \to \pp^n$ )  
be the blowing-up at $r$ (resp. $r+n-1$) points in very general position.
The Picard Group of $Y$ is generated by the 
$r+n$ classes of $H_1, \dots , H_n, E_1, \dots E_r$ where 
$E_i$ is the exceptional divisor over the $i$-th point 
and $H_i$ is the pull-back of the prime divisor
of equation $x_i=0$. 
The Picard Group of $X$ is generated by the 
$r+n$ classes of $H, E_1, \dots, E_{r+n-1}$ where 
$E_i$ is the exceptional divisor over the $i$-th point 
and $H$ is the pull-back of a hyperplane.
We will call these bases {\em tautological}.

\begin{remark}(Points in very general position).\label{pivrp}
Let $q_1, \dots , q_r$ be distinct points of $(\pp^1)^n$
and let $m\in \nn^r$. Consider the scheme $(\pp^1)^n_{[r]}$
parametrizing $r$-tuples of points in $(\pp^1)^n$
and let $\mathcal{Q} \in (\pp^1)^n_{[r]}$ be the point
$q_1+\dots+q_r$.
For $d\in \mathbb{N}^n$
denote by $\mathcal{H}(d,m,\mathcal{Q})$ the vector space
of degree $d$ homogeneous polynomials
of $\kk[x_1, y_1, \dots , x_n, y_n]$ with multiplicity at least $m_i$ 
at each $q_i$. Observe that $\mathcal{H}(d,m, \mathcal{Q})$
depends on $\mathcal{Q}$ and that there is a Zariski open subset
$\mathcal{U}(d,m)\subset (\pp^1)^n_{[r]}$ where this dimension
attains its minimal value. Let us denote by 
\[ \mathcal{U} := \bigcap_{(d,m)\in \nn^{n+r}} \mathcal{U}(d,m).\]
We say that the points $q_1, \dots , q_r$ are in very general position
if the corresponding $\mathcal{Q}$ is in $\mathcal{U}$.
\end{remark}

\begin{definition}
Given a birational map $\phi : X \dashrightarrow Y$
of algebraic varieties we say that $\phi$ is a 
\textit{small modification} if there exist open subsets 
$U\subseteq X$ and $V\subseteq Y$ such that 
$\varphi(U)\subseteq V$, the restriction
$\phi|_U$ is an isomorphism and both
$X-U$ and $Y-V$ have codimension at lest two.
Note that any small modification induces 
mutually inverse isomorphisms of push-forward
and pull-back
\[
 \phi_* : \Pic (X) \rightarrow \Pic(Y)
 \qquad
 \phi^* : \Pic (Y) \rightarrow \Pic(X).
\]
Moreover $h^0 (Y,\phi_*(D))=h^0(X,D)$ for any 
$D\in \Pic (X)$ and $h^0 (X,\phi^*(D))=h^0(Y,D)$
for any $D\in\Pic(Y)$.
\end{definition}

\begin{definition}
Let $X$ be an algebraic variety, 
$D \in \Pic(X)$ a divisor class
and $V\subset X$ a subvariety.
We say that $V$ is contained with
multiplicity $m$ in the base locus of $D$
if the exceptional divisor $E$ of the blow-up
$\pi \colon \tilde{X}\rightarrow X$ 
of $X$ at $V$ is contained with multiplicity
$m$ in the base locus of $\pi^* (D)$.
\end{definition}

\begin{remark}\label{toric}
Let $\phi : \pp^n \rightarrow (\pp^1)^n$ be the
birational map defined by 
$[x_0 : \dots : x_n ]\mapsto ([x_{n-1}:x_n],\dots , [x_0:x_n])$.
Let $p_1,\dots,p_{r+n-1}$ be points of
$\pp^n$ in very general position such that
the first $n+1$ are the fundamental ones
and let $q_1,\dots,q_r$ be points of $(\pp^1)^n$
such that $q_1=([0:1], \dots , [0:1])$,  
$q_2=([1:0], \dots , [1:0])$ and $q_{i+2}
= \phi(p_{i+n+1})$ for $i\in\{1,\dots,r-2\}$.
This gives the following commutative diagram
\[
 \xymatrix{
 X \ar@{-->}[r]^-{\phi}\ar[d]^-{\pi_X}
  & Y \ar[d]^-{\pi_Y}\\
  \pp^n\ar@{-->}[r]^-{ \phi } & (\pp^1)^n,
 }
\]
where with abuse of notation we are denoting
by the same symbol $\phi$ and its lift.
To show that the above lift is a small modification
it is enough to consider the case $r=2$. In this
case we have commutative diagrams
\[
 \xymatrix@C=5pt{
  \Sigma_{n+1}^n\ar[d] & \supseteq & \Sigma 
  & \subseteq & \Sigma_2^{1,n}\ar[d]\\
  \Sigma^n & & & & \Sigma^{1,n}
 }
 \qquad
 \qquad
 \xymatrix@C=5pt{
   X_{n+1}^n\ar[d] & \supseteq & X(\Sigma)
  & \subseteq & Y_2^n\ar[d]\\
  \pp^n & & & & 
  (\pp^1)^n
 } 
\]

where the first diagram is obtained by completing
in two different ways the fan $\Sigma$ whose cones
are exactly the one-dimensional cones of $\mathbb{Z}^n$
generated by the vectors $\{\pm e_1,\dots,\pm e_n,
\pm(e_1+\dots+e_n)\}$, while the second diagram
is obtained applying the toric functor to the first one.
Since the complement of $X(\Sigma)$ in 
both $X(\Sigma_{n+1}^n)$ and $X(\Sigma^n)$
is of codimension at least two, then the corresponding
toric birational map $\phi\colon X\dashrightarrow Y$ 
is small.
We recall that the map $\phi$ and its action on
fat points has already been considered in~\cite{CGG}.
\end{remark}

With the above notation the induced isomorphism 
${\phi}_* : \Pic (X) \rightarrow \Pic (Y)$ 
is given by 
\begin{eqnarray}
\label{isometry}
\left\{
\begin{array}{lll}
 H & \mapsto \sum_{i=1}^n H_i -(n-1)E_1 \\[3pt]
 E_{n+1} & \mapsto E_2 \\[3pt]
 E_i & \mapsto H_{n+1-i}-E_1 & \text{ for } 1\leq i \leq n \\[3pt]
 E_i & \mapsto E_{i-n+1} & \text{ for } i> n+1.
\end{array}
\right.
\end{eqnarray}

\section{Standard form}\label{sbm}

Let us recall that we  
denote by $Y$ the blow-up
of $(\pp^1)^n$ at $r$ points 
$ q_1, \dots , q_r $
in very general position.
Without loss of generality we can assume the 
first two points to be $q_1=([0:1], \dots , [0:1])$,  
$q_2=([1:0], \dots , [1:0])$.
In this section we recall the definition
of a non-degenerate quadratic form 
$\Pic(Y)\to\zz$ already introduced
in~\cite{TC}.
Define the bilinear form
$\Pic(Y)\times\Pic(Y)\to\zz$
by $(D_1,D_2)\mapsto D_1\cdot D_2$
whose values on pairs of elements of
the basis $(H_1, \dots , H_n, E_1, \dots , E_r)$
are the following:
\begin{eqnarray}
\label{intersections}
 H_i\cdot H_j=1-\delta_{ij}
 \qquad
 E_k\cdot E_s=-\delta_{ks}
 \qquad
 H_i\cdot E_k = 0,
\end{eqnarray}
where $i,j\in\{1,\dots,n\}$ and
$k,s\in\{1,\dots,r\}$.
Observe that the lattice $\Pic(Y)$
equipped with the integer quadratic form induced
by the above bilinear form has
discriminant group isomorphic to
$\zz/(n-1)\zz$ and generated by
the class $\frac{1}{n-1}K_Y$.
Recall that given a non-degenerate lattice 
$\Lambda$ and an element $R\in\Lambda$
with $R^2=-2$ one can define the 
{\em Picard-Lefschetz reflection} defined
by $R$ as:
\[
\sigma_R \colon \Lambda\rightarrow\Lambda
\qquad
D\mapsto D+(D\cdot R)R.
\]
Observe that $\sigma_R$ is the reflection
in $\Lambda$ with respect to the hyperplane
orthogonal to $R$.
The {\em Weyl group} of $\Lambda$, denoted by
$W(\Lambda)$ is the subgroup of isometries 
of $\Lambda$ generated by the Picard-Lefschetz
reflections.
For simplicity, given an algebraic variety $X$
and a bilinear form on $\Pic (X)$
we denote the Weyl group of its Picard group
by $W(X)$.\\

The following is a particular case 
of ~\cite[Theorem 1]{Muk}:

\begin{proposition}\label{muk}
For each transformation 
$w\colon \Pic (Y)\rightarrow\Pic(Y)$
of $W(Y)$, there is a small modification 
$w \colon Y \dashrightarrow Y_w$
with the following property: 
$Y_w$ is also a blow-up of $(\pp^1)^n$
in $r$ points $q_1, \dots , q_r$ 
in general position and the pull-back 
of the tautological basis of $\Pic(Y_w)$ 
coincides with the transformation 
of the tautological basis of $Y$ by $w$.
\end{proposition}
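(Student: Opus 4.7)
My plan is to reduce the proposition to realizing each generator of $W(Y)$ by an explicit small modification, and then composing. Since a composition of small modifications is again a small modification, and pushforward/pullback on Picard groups compose in the evident way, it suffices to fix a generating set of Picard--Lefschetz reflections and exhibit the required small modification for each.

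The generators I would work with fall into three families. First, $\sigma_{E_i-E_j}$ (which swaps $E_i$ and $E_j$ while fixing the remaining tautological classes) is realized by simply relabeling the points $q_i$ and $q_j$: the identity $Y\to Y$ viewed as a new blow-up gives a trivial small modification with the desired effect on bases. Second, $\sigma_{H_i-H_j}$ is realized by the automorphism of $\pn$ that interchanges the $i$-th and $j$-th factors, which lifts to a biregular automorphism of $Y$. In both cases the induced action on the tautological basis can be read off directly from the definition of the reflection.

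The essential case is a Cremona-type reflection. Here I would exploit the small modification $\phi\colon X\dashrightarrow Y$ of Remark~\ref{toric}, whose pushforward on Picard groups is the lattice isometry \eqref{isometry}. The standard Cremona transformation $c\colon \pp^n\dashrightarrow \pp^n$ centered at the $n+1$ fundamental points is itself a small modification (its indeterminacy lies in coordinate subspaces of codimension $\geq 2$), and it lifts to a small modification $\tilde{c}\colon X\dashrightarrow X$ which is the identity on the exceptional divisors over the remaining $r-2$ points. Composing, $w:=\phi\circ\tilde{c}\circ\phi^{-1}\colon Y\dashrightarrow Y_w$ is a small modification, and its action on $\Pic(Y)$ can be computed using \eqref{isometry} and its inverse; it coincides with the Picard--Lefschetz reflection along a Cremona root $R$ of square $-2$ that mixes the $H_i$ and $E_k$ classes.

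The main obstacle is to verify that these three families generate $W(Y)$. Concretely, one must show that any class $R\in\Pic(Y)$ with $R^2=-2$ can be $W$-conjugated, by a product of the reflections above, to one of the explicit roots $E_i-E_j$, $H_i-H_j$, or the Cremona root. This is a combinatorial statement in the root lattice; I would transport it through the isometry $\phi_*$ to the analogous question in $\Pic(X)$, where the blow-up of $\pp^n$ offers a more symmetric framework and the argument mirrors the classical analysis of Weyl groups of blow-ups of projective space. Once generation is settled, the proposition follows by composing the explicit small modifications assigned to the letters of any reduced expression in the generators.
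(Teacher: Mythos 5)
You should first note that the paper does not actually prove this proposition: it is quoted as a special case of \cite[Theorem 1]{Muk}, and the discussion that follows it in the text merely records, for each generator of $W(Y)$ from \cite[Lemma 2.1]{TC}, the explicit birational involution of $\pn$ that realizes it. Your overall strategy --- realize each generating reflection by a small modification and compose --- is the right one and is essentially Mukai's; your treatment of $\sigma_{H_i-H_j}$ (coordinate swap) and $\sigma_{E_i-E_j}$ (relabelling) is correct. The generation question you single out as the main obstacle is not really the issue: the paper simply takes the finite generating set from \cite[Lemma 2.1]{TC} as known input.

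The genuine gap is in what you call the essential case. Conjugating the standard Cremona transformation of $\pp^n$ centered at the $n+1$ fundamental points by $\phi$ does \emph{not} yield a reflection in a root mixing the $H_i$ with the $E_k$. Writing the Cremona root of $\Pic(X)$ as $H-E_1-\cdots-E_{n+1}$ (tautological basis of $X$) and applying \eqref{isometry},
\[
\phi_*\bigl(H-E_1-\cdots-E_{n+1}\bigr)
=\Bigl(\sum_{i=1}^nH_i-(n-1)E_1\Bigr)-\sum_{i=1}^n\bigl(H_{n+1-i}-E_1\bigr)-E_2
=E_1-E_2 ,
\]
so $\phi\circ\tilde c\circ\phi^{-1}$ acts on $\Pic(Y)$ as $\sigma_{E_1-E_2}$, i.e.\ it merely exchanges $q_1$ and $q_2$; geometrically $\phi\circ c\circ\phi^{-1}$ is the automorphism $([x_i:y_i])_i\mapsto([y_i:x_i])_i$ of $\pn$, already covered by your relabelling case. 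Conversely, $\phi_*^{-1}(H_1-E_1-E_2)$ equals the difference $E_n-E_{n+1}$ of two exceptional classes of $\Pic(X)$, so no Cremona transformation of $\pp^n$ conjugates, via this $\phi$, to the one generator you actually need, namely $\sigma_{H_1-E_1-E_2}$. That reflection has to be realized directly by the involution
$([x_1:y_1],\dots,[x_n:y_n])\mapsto([y_1:x_1],[x_2y_1:x_1y_2],\dots,[x_ny_1:x_1y_n])$
of $\pn$ recorded in the paper right after the proposition (from \cite[p.~128]{Muk}), checking that its lift to the blow-up is small and acts on the tautological basis as $\sigma_{H_1-E_1-E_2}$. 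As written, your construction never produces this generator, so the composition step cannot reach a general element of $W(Y)$.
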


In ~\cite[Lemma 2.1]{TC} 
a set of generators for $W(Y)$ consists 
of $n+r-1$ reflections with respect to the following roots:
\[
 H_1-E_1-E_2, \quad
 H_1-H_2,  \dots, H_{n-1}-H_n,\quad
 E_1-E_2, \dots , E_{r-1}-E_r.
\]
Let $\omega$ be any element of the Weyl 
group $W(Y)$ and let $\varphi_\omega\colon
Y\dashrightarrow Y_\omega$ be the corresponding
small modification. We have that $\varphi_\omega$
is the lift of a birational map $\phi_\omega\colon (\pp^1)^n
\to (\pp^1)^n$, moreover $Y_\omega$ is the 
blow-up of $(\pp^1)^n$ at points $q'_1, \dots , q'_r$ 
where $q'_1 =q_1$,$q'_2 =q_2$ and 
$q'_i=\phi_\omega(q_i)$ for $i\geq3$.
The birational involution of $(\pp^1)^n$
associated to the root $H_1-E_1-E_2$
is the following~\cite[pag. 128]{Muk}:
\[
([x_1:y_1],\dots,[x_n:y_n])
\mapsto
\left(\left[\frac{1}{x_1}:\frac{1}{y_1}\right], 
\left[ \frac{x_2}{x_1}:\frac{y_2}{y_1} \right],
  \dots, \left[\frac{x_n}{x_1}:\frac{y_n}{y_1}\right]\right).
\]
The birational involution of $(\pp^1)^n$
associated to the root $H_i-H_{i+1}$
is the transformations of $(\pp^1)^n$ 
which exchanges the $i$-coordinate 
with the $i+1$-coordinate, for $i\in\{1,\dots,n-1\}$.
Finally the birational map of $(\pp^1)^n$
associated to the root $E_i-E_{i+1}$
is the identity map as we are just
relabeling two points between the
$q_i$'s.

\begin{remark}\label{iii}
Observe that the map 
$\phi_* \colon \Pic (X) \rightarrow \Pic (Y)$
is an isometry of lattices.

To see this it is enough to check that $\phi_*$
preserves the intersection matrix of
the basis $(H,E_1,\dots,E_r)$ of $\Pic(X)$.
This holds by~\eqref{isometry}
and the definition of the bilinear forms on the two
lattices~\cite[2.1]{TC}.
We recall also that for any $\omega\in W(Y)$
and any $D,D'\in \Pic(Y)$ we have
$h^0(Y,D)=h^0(Y_\omega,\omega(D))$ 
by~\cite[Lemma 2.3]{TC} and
$D$ is integral if and only if $\omega(D)$ is.
\end{remark}

\begin{definition}
A class 
$D= \sum_{i=1}^n d_iH_i - \sum_{i}^r m_i E_i$
of $\Pic (Y)$
is in {\em pre-standard form} if the following
inequalities hold:
\[
 d_1 \geq d_2 \geq \dots \geq d_n \geq 0
 \qquad
 m_1 \geq m_2 \geq \dots \geq m_r
 \qquad
 \sum_{i=2}^n d_i \geq m_1 + m_2.
\]
If in addition $m_r\geq 0$,
then $D$ is in {\em standard form}.
\label{def4.1}
\end{definition}

\begin{remark}
By Definition~\ref{def4.1}
~\cite[Definition 3.1 ]{LU1} and the
action of $\phi$ given above
we have that a class $D$ in the
Picard group of $X$
is in pre-standard form (resp. in
standard form) if and only if 
${\phi}_*(D)$ is in pre-standard form
(resp. in standard form).
In particular by~\cite[Proposition 3.2]{LU1}
we deduce that for any effective class
$D\in \Pic(Y)$ there exists a $w\in W(Y)$ 
such that $w(D)$ is in pre-standard form.
\end{remark}

\begin{remark}
A $(-1)$-{\em class} of $\Pic(Y)$ is 
the class of an irreducible and reduced 
divisor $E$ such that $E^2= E\cdot K =-1$ 
where $K:=\frac{1}{n-1} K_{Y}$.
Observe that this definition coincides with
the classical concept of $(-1)$-class
when $n=2$.

By Remark~\ref{iii} and ~\cite[Section 4]{LU1} 
we conclude the following:
The $(-1)$-classes form an orbit with
respect to the action of the Weil group.
Moreover if $D$ is a class in standard form, 
then $w(D)\cdot E\geq 0$ for any $(-1)$-class 
$E$ and any $w\in W(Y)$.
Finally, some geometric properties of $(-1)$-curves on surfaces 
generalize to $(-1)$-classes:
if $D$ is effective and $D\cdot E <0$ for some $(-1)$-class $E$
then $E\subset Bs |D|$ 
and if $E, E'$ are two distinct $(-1)$-classes
having negative product with $D$ then $E\cdot E'=0$.\\
\end{remark}

The following program
given a class $D\in \Pic(Y)$
returns its standard form $D'\in \Pic(Y)$
or returns $0\in\Pic(Y)$ if the
linear system induced by $D$ is empty.\\

\begin{algorithm}[H]\label{alg1}
\caption{Standard form.}
 \KwIn{$(d,m)\in\nn^n\times\nn^r$, with $r\geq 2$.}
 \KwOut{$(d,m)\in\nn^{n}\times\nn^{r}$ or $\emptyset$.}
 Sort both $d=(d_1,\dots,d_n)$ and $m=(m_1,\dots,m_r)$ in decreasing order\;
\While{ $k:=\sum_{i=2}^{n} d_i - m_1-m_2<0$ and $\min(d_1, \dots , d_n)\geq 0$}
{                           
$(d_1,m_1,m_2):=(d_1,m_1,m_2)+(k,k,k)$\;
Sort both $d$ and $m$ in decreasing order\;
}
\eIf{ $\min(d_1, \dots , d_n)<0$ }{ \KwRet{ $\emptyset$}}{ \KwRet{ $(d,m)$ }\; }
\end{algorithm}

\section{Fiber special systems}\label{fss}

Recall that we denote by 
$\pi\colon Y\to (\pp^1)^n$ 
the blow-up of $(\pp^1)^n$ at $r$ points
$q_1, \dots , q_r $ in very general position.
Given a subset $I\subseteq \{1,\dots,n\}$
we denote by $P_I\colon (\pp^1)^n\to
(\pp^1)^{|I|}$ the morphism defined by 
(if $I$ is empty $P_I$ is the constant morphism
to a point)
\begin{eqnarray*}
 ([x_1 : y_1], \dots , [x_n : y_n])
 &\mapsto &
 ([x_i:y_i] : i\in I).
\end{eqnarray*}
We denote by $F_{j,I}$ the fiber
of $P_I$ through the point $q_j$ 
for any $j$.
Given a vector $(d_1, \dots , d_n)\in
\mathbb{N}^n$
we will denote by
\begin{eqnarray}
\label{SI}
 s_I 
 :=
 \sum_{i \in I} d_i
 \quad
 \text{ and }
 \quad
S_I := 1 + |I| +s_I
\quad
\text{ for any } I\subseteq \{1, \dots, n\},
\end{eqnarray}
where $s_\emptyset = 0$ and $S_\emptyset = 1$.
Observe that by the assumption made on the points
$F_{i,I}\cap F_{j,I}=\emptyset$ 
for any $i\neq j$.
In Section~\ref{fss} and Section~\ref{dg} we use the
notation $\ls:=\ls_{(d_1, \dots , d_n)}(m_1, \dots , m_r)$
to denote a general linear system when no confusion arises.
We denote by $V(\ls)$ the subvector space
of homogeneous polynomials
of $\kk[x_1,y_1,\dots,x_n,y_n]$
of degree $(d_1, \dots , d_{n})$
and multiplicity at least $m_{1}, \dots, m_r$ 
at $q_1, \dots, q_r$ respectively.

\begin{definition}
\label{def:fdim}
The \textit{fiber dimension} of the linear 
system $\ls$ is
\begin{eqnarray*}
 \fdim (\ls) 
 &:=&
 \prod_{i=1}^n (d_i+1) - 
 \sum_{\substack{1\leq j\leq r\\ 
 I\subseteq\{1,\dots,n\}\\ S_I \leq m_j} } (-1)^{|I|} \binom{  m_j -S_I+ n}{n}-1
\end{eqnarray*}
and the the {\em fiber-expected dimension}
is $\efdim(\ls):=\max(-1,\fdim(\ls))$.
We say that $\ls$ is {\em fiber special}
if $\dim(\ls)>\efdim(\ls)$ and it is 
{\em fiber non-special} otherwise.
\end{definition}

\begin{theorem}
\label{teo:fdim}
For any linear system $\ls$ the following inequalities
hold $\dim(\ls)\geq \efdim(\ls)\geq \edim(\ls)$.
\end{theorem}
\begin{proof}
Denote by $\Delta(m)\subseteq\zz_{\geq 0}^n$,
for $m\geq 1$, the set of integer points of the $n$-dimensional
simplex which is the convex hull of
the points: $0,(m-1)e_1,\dots,(m-1)e_n$.
Let $V\subseteq \kk[x_1,\dots,x_n]$ 
be the subvector space of polynomials
of degree at most $(d_1, \dots , d_n)$.
Given $w\in\zz_{z\geq 0}^n$ we define
the partial derivative $\partial/\partial x^w$,
where $x^w = x_1^{w_1}\cdots x_n^{w_n}$.
Let 
\[
 \Phi \colon V \rightarrow \kk^N
 \qquad
 f\mapsto \left(\dfrac{\partial f}{\partial x^w}(p_j)\, :\, 1\leq j\leq r
 \text{ and }w\in \Delta(m_j)\cap\zz_{\geq 0}^n\right)
\]
be the function which maps $f$ to the collection of 
all partial derivatives of $f$, corresponding 
to the integer points of the polytope 
$\Delta(m_j)$, 
evaluated at $p_j$ for each $j$.
Observe that $\dim(\ls)$ equals $\dim(\ker(\Phi))$.
Moreover if $w$ is an integer vector
outside the polytope $\Delta(m_j)\cap\prod_{i=1}^n[0,d_i]$,
then $\partial f/\partial x^w$ is the zero polynomial.
Thus any such $w$ does not impose
conditions on the kernel of $\Phi$.
Using the inclusion-exclusion principle
we see that the number of integer vectors
of the polytope $\Delta(m_j)\cap\prod_{i=1}^n[0,d_i]$
equals
\[
 \mu_j = \sum_{\substack{I\subseteq\{1,\dots,n\} \\ m_j \geq S_I} } 
 (-1)^{|I|} \binom{  m_j -S_I+ n}{n}.
\]
Thus the point $p_j$ of multiplicity $m_j$ can impose
at most $\mu_j$ conditions and the first inequality 
$\dim(\ls)\geq\efdim(\ls)$ follows.
The second inequality follows by observing
that the number of integer vectors of $\Delta(m_j)$
is greater than or equal to the number of integer
vectors of $\Delta(m_j)\cap\prod_{i=1}^n[0,d_i]$.
\end{proof}

\begin{remark}
For a linear system $\ls$ of $\pp^n$ through multiple base 
points in very general position, in~\cite{BDP} the authors 
introduce the liner expected dimension ${\rm eldim}(\ls)$,
which takes into account the speciality coming from
linear subspaces through some of the points. In that
case the authors asset that the inequality $\dim(\ls)
\geq {\rm eldim}(\ls)$ is equivalent to the weak Fr\"oberg-Iarrobino
conjecture~\cite[Remark 3.4]{BDP}.
The reason why in $(\pp^1)^n$ one can easily prove
the inequality $\dim(\ls)\geq\efdim(\ls)$ is that the 
each subvariety taken into account in the $\fdim$ formula
passes exactly through one point. 
\end{remark}

\begin{theorem}
\label{fdim}
A linear system $\ls$ through two points is fiber non-special.
\end{theorem}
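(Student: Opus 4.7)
The plan is to reduce to a toric monomial count. By the automorphism group of $(\pp^1)^n$ and the very general position of the points, we may assume $q_1 = ([0:1],\ldots,[0:1])$ and $q_2 = ([1:0],\ldots,[1:0])$, as in Section~\ref{sbm}. Every polynomial of multidegree $d=(d_1,\ldots,d_n)$ is then a unique $\kk$-linear combination of monomials $\prod_i x_i^{\alpha_i}y_i^{d_i-\alpha_i}$ with $0\le\alpha_i\le d_i$; in the affine coordinates $u_i=x_i/y_i$ at $q_1$ such a monomial becomes $\prod_i u_i^{\alpha_i}$, with vanishing order $|\alpha|:=\sum_i\alpha_i$, and symmetrically with vanishing order $|d|-|\alpha|$ at $q_2$, where $|d|:=\sum_i d_i$. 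Hence $V(\ls)$ has a monomial basis indexed by
\[
N := \{\alpha\in\zz^n : 0\le\alpha_i\le d_i,\ m_1\le|\alpha|\le|d|-m_2\}.
\]

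Next I would apply inclusion-exclusion twice. Setting $A=\{\alpha : 0\le\alpha_i\le d_i,\ |\alpha|<m_1\}$ and $B=\{\alpha : 0\le\alpha_i\le d_i,\ |\alpha|>|d|-m_2\}$, we have
\[
\dim V(\ls) = \prod_{i=1}^n(d_i+1) - |A| - |B| + |A\cap B|.
\]
A further inclusion-exclusion, this time over the upper bounds $\alpha_i\le d_i$ via the shifts $\alpha_i\mapsto\alpha_i-(d_i+1)$ for $i\in I$, yields
\[
|A| = \sum_{\substack{I\subseteq\{1,\ldots,n\}\\ m_1\ge S_I}}(-1)^{|I|}\binom{m_1-S_I+n}{n},
\]
and analogously for $|B|$ by the symmetry $\alpha\leftrightarrow d-\alpha$. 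Comparing with the definition of $\fdim$ collapses these two computations into the single identity $\dim V(\ls) = \fdim(\ls)+1+|A\cap B|$.

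The only real obstacle is the overlap term $|A\cap B|$, but it turns out to be harmless. If $\dim\ls\ge 0$ then $N\ne\emptyset$, hence $m_1+m_2\le|d|$; any $\alpha\in A\cap B$ would then satisfy $|d|-m_2<|\alpha|<m_1\le|d|-m_2$, a contradiction, so $A\cap B=\emptyset$ and $\dim\ls = \fdim(\ls) = \efdim(\ls)$. If instead $V(\ls)=0$, the identity yields $\fdim(\ls)=-1-|A\cap B|\le -1$, whence $\efdim(\ls)=-1=\dim\ls$. Either way, $\ls$ is fiber non-special.
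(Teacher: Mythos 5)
Your proof is correct, and it shares the paper's key reduction: both arguments exploit the fact that $q_1,q_2$ may be taken to be the two torus-fixed points $([0:1],\dots,[0:1])$ and $([1:0],\dots,[1:0])$, so that the multiplicity conditions act diagonally on monomials and $V(\ls)$ acquires the monomial basis indexed by your set $N$. Where you diverge is in evaluating $|N|$: the paper argues by induction on $n$, peeling off the last $\pp^1$ factor through the recursion $|\mathfrak{B}(\ls)|=\sum_{j=0}^{d_n}|\mathfrak{B}(\ls_{(d_1,\dots,d_{n-1})}(m_1-j,\,m_2-d_n+j))|$ and absorbing the resulting sum with the hockey-stick identity of Lemma~\ref{eqq}, whereas you compute $|N|$ in closed form by a double inclusion--exclusion (first over the two corner simplices $A$, $B$ of the box, then over the upper bounds $\alpha_i\le d_i$ via the shifts by $d_i+1$) and match the outcome term by term with the definition of $\fdim$. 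Your route buys two things: it dispenses with Lemma~\ref{eqq} altogether, and the correction term $|A\cap B|$ makes fully explicit why the degenerate case is harmless --- when $m_1+m_2>\sum_i d_i$ the identity gives $\fdim(\ls)=-1-|A\cap B|\le -1$, hence $\efdim(\ls)=-1=\dim(\ls)$, a case the paper's one-line induction leaves implicit. What the paper's recursion buys in exchange is a formulation adapted to slicing off one coordinate at a time, which is the mechanism reused in the degeneration arguments of Section~\ref{dg}. Both are complete proofs of the statement.
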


The proof of the following lemma
is a direct consequence of the
identity $\sum_{i=n}^k \binom{i}{n} 
= \binom{k+1}{n+1}$ which holds
for any $k\geq n$.

\begin{lemma}\label{eqq}
Let $I$ be an ordered subset
of $\{1,\dots,n-1\}$, let 
$J:=I\cup\{n\}$ and let
$m$ be a non-negative integer.
Given a vector $(d_1, \dots , d_n)\in \nn^n$
let $S_I$ be defined as in~\eqref{SI}.
Then the following holds
\[
 \sum_{j=0}^{d_n}\sum_{m-j\geq S_I} 
 \binom{m-j-S_I+n-1}{n-1}
 =
 \sum_{m\geq S_I} \binom{m-S_I+n}{n}
 - \sum_{m\geq S_J} \binom{m- S_J+n}{n}.
\]
\end{lemma}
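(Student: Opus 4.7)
The plan is to reduce the identity directly to the hockey-stick formula $\sum_{i=n}^{k}\binom{i}{n}=\binom{k+1}{n+1}$ cited in the paragraph preceding the lemma. First I would normalize the setup: by~\eqref{SI} the assumption $J=I\cup\{n\}$ forces $S_J = S_I + 1 + d_n$, so both sides of the identity vanish when $m < S_I$. I may therefore set $k := m - S_I \geq 0$ and reformulate everything in terms of this single parameter $k$.

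With this reduction the inner sum on the left is nonempty only when $j \leq k$, so the LHS equals $\sum_{j=0}^{\min(k,\,d_n)} \binom{k - j + n - 1}{n-1}$. Substituting $i = k - j$ converts this into the truncated sum $\sum_{i = \max(0,\,k-d_n)}^{k} \binom{i + n - 1}{n-1}$. Applying the hockey-stick identity once to the full range $0 \leq i \leq k$ and again to the tail $0 \leq i \leq k - d_n - 1$ (empty whenever $k \leq d_n$) yields
\[
\binom{k + n}{n} \;-\; \binom{k - d_n - 1 + n}{n},
\]
where the second binomial is read as $0$ whenever its upper index is negative.

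Finally I would match this against the right hand side. Since $k \geq 0$ the first summand on the RHS equals $\binom{m - S_I + n}{n} = \binom{k + n}{n}$, while the second summand equals $\binom{m - S_J + n}{n} = \binom{k - d_n - 1 + n}{n}$ and appears precisely when $m \geq S_J$, i.e.\ $k \geq d_n + 1$. In the complementary case $0 \leq k \leq d_n$ the lower truncation on the LHS is vacuous and the second term on the RHS is absent, so both sides collapse to $\binom{k + n}{n}$. The only point requiring care is that the two case splits on the two sides line up, and this is guaranteed by the convention $\binom{N}{n} = 0$ for $0 \leq N < n$, which is precisely what makes the tail and the second RHS summand vanish simultaneously; beyond this bookkeeping the argument is purely the hockey-stick identity applied twice.
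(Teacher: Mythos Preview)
Your proof is correct and follows exactly the approach the paper indicates: the paper's ``proof'' is the single sentence that the lemma is a direct consequence of the hockey-stick identity $\sum_{i=n}^{k}\binom{i}{n}=\binom{k+1}{n+1}$, and you have simply written out the bookkeeping (the substitution $k=m-S_I$, the reindexing $i=k-j$, and the case split $k\le d_n$ versus $k\ge d_n+1$) that makes this direct consequence explicit. There is no difference in method, only in level of detail.
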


\begin{proof}[Proof of Theorem~\ref{fdim}]
Without loss of generality we can assume that 
$q_1:=([0:1],\dots, [0:1])$, $q_2:=([1:0],\dots , [1:0])$.
Hence a basis $\mathfrak{B}(\ls)$
for $V(\ls)$ consists of
the monomials of the form
$\prod_{i=1}^n x_i^{a_i}y_i^{b_i}$
where $\sum_{i=1}^n a_i \geq m_1$,
$\sum_{i=1}^n b_i \geq m_2$
and $a_i+b_i = d_i$
for any $i$.
The statement follows by induction on $n$
using Lemma~\ref{eqq} and the equality
\[
 |\mathfrak{B}(\ls)|
 =
 \sum_{j=0}^{d_n}
 |\mathfrak{B}(\ls_{(d_1,\dots,d_{n-1})}(m_1-j, m_2-d_n+j))|.
\]
\end{proof}

\begin{corollary}
A linear system $\ls:=\ls_{(d_1,\dots,d_n)}(m_1,m_2)$
is effective if and only if $\sum_{i=1}^nd_i\geq m_1+m_2$.
\end{corollary}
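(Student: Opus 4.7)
The plan is to read off the corollary directly from the basis of $V(\ls)$ exhibited in the proof of Theorem~\ref{fdim}. Since $\ls$ is effective precisely when $V(\ls)\neq 0$, it suffices to decide when the set $\mathfrak{B}(\ls)$ is non-empty.

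First I would reduce to the normalized position used in the proof of Theorem~\ref{fdim}, namely $q_1=([0:1],\dots,[0:1])$ and $q_2=([1:0],\dots,[1:0])$. The points being in very general position is preserved up to the automorphism group of $(\pp^1)^n$, which acts transitively enough on pairs to move any two distinct points to this configuration; this is already implicit in the setup of Theorem~\ref{fdim}. Under this normalization we inherit the monomial basis
\[
 \mathfrak{B}(\ls)
 =
 \Bigl\{\,\textstyle\prod_{i=1}^n x_i^{a_i}y_i^{b_i}\;:\;
 a_i+b_i=d_i,\ \sum_{i}a_i\geq m_1,\ \sum_{i}b_i\geq m_2\,\Bigr\}.
\]

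For the forward direction, if some element of $\mathfrak{B}(\ls)$ exists then summing the constraint $a_i+b_i=d_i$ over $i$ gives $\sum_i d_i=\sum_i a_i+\sum_i b_i\geq m_1+m_2$. For the converse, assuming $\sum_i d_i\geq m_1+m_2$ I would construct a monomial explicitly: if $m_1\leq 0$ put $a_i:=0$ and $b_i:=d_i$, which trivially satisfies $\sum b_i=\sum d_i\geq m_2$; otherwise, since $0\leq m_1\leq \sum_i d_i$, a greedy distribution produces integers $0\leq a_i\leq d_i$ with $\sum_i a_i=m_1$, and then $b_i:=d_i-a_i\geq 0$ satisfies $\sum_i b_i=\sum_i d_i-m_1\geq m_2$. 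Either way the resulting monomial lies in $\mathfrak{B}(\ls)$, so $V(\ls)\neq 0$.

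There is really no obstacle here: once the monomial basis from the proof of Theorem~\ref{fdim} is in hand the statement is a one-line combinatorial exercise. The only thing to be careful about is handling the degenerate case $m_i\leq 0$ (where the multiplicity condition is vacuous), which is covered by the construction above.
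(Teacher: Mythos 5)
Your proposal is correct and follows exactly the paper's own route: both arguments reduce to deciding when the monomial basis $\mathfrak{B}(\ls)$ from the proof of Theorem~\ref{fdim} is non-empty, with the forward direction coming from summing $a_i+b_i=d_i$ and the converse from an explicit distribution of the exponents. Your greedy construction just spells out the existence of the $a_i,b_i$ that the paper asserts without detail.
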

\begin{proof}
If $\sum_{i=1}^nd_i< m_1+m_2$ then, 
with the same notation of the proof of
Theorem~\ref{fdim} either $\sum_{i=1}^na_i< m_1$
or $\sum_{i=1}^nb_i< m_2$ so that there are
no monomials in $V(\ls)$ and thus $\ls$
is empty. On the other hand if 
$\sum_{i=1}^nd_i\geq m_1+m_2$
then there are $a_i,b_i$ such that 
$\sum_{i=1}^na_i\geq m_1$
and $\sum_{i=1}^nb_i\geq m_2$
and $a_i+b_i=d_i$ for any $i$.
Thus $V(\ls)$ contains a monomial
and hence $\ls$ is not empty.
\end{proof}

\begin{proposition}
Let $\ls$
be a non-empty linear system.
Then the fiber $F_{I,j}$
is contained in the base locus of $\ls$
with multiplicity
\[     \mu\geq \max \{ m_j-s_{I^c}, 0 \}       \]
and the equality holds when $r\leq 2$.
\end{proposition}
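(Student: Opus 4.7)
The plan is to translate the multiplicity of $F_{j,I}$ in the base locus of $\ls$ into a vanishing-order statement for polynomials in affine coordinates centered at $q_j$, and then verify both parts by a direct monomial count. First I would set up the dictionary: up to the automorphisms of $(\pp^1)^n$ acting factorwise by $\mathrm{PGL}_2$ we may assume $q_j=([0:1],\dots,[0:1])$, and work in the affine chart $\{y_i\ne 0\}$ with coordinates $u_i:=x_i/y_i$. Then $q_j$ is the origin and $F_{j,I}$ is the smooth coordinate subvariety $\{u_i=0:i\in I\}$. Dehomogenization identifies $V(\ls)$ with the space of polynomials $f(u_1,\dots,u_n)=\sum_a c_a\prod_i u_i^{a_i}$ satisfying $a_i\le d_i$ and $\mult_0 f\ge m_j$, and since $F_{j,I}$ is smooth, the multiplicity of $F_{j,I}$ in the base locus of $\ls$ equals the minimum over $f\in V(\ls)$ of the order of vanishing of $f$ along $F_{j,I}$, namely $\min\{\sum_{i\in I}a_i : c_a\ne 0\}$.

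The lower bound is then immediate: for any monomial occurring in such an $f$, the constraints $a_i\le d_i$ give $\sum_{i\in I^c}a_i\le s_{I^c}$, and combining with $\sum_i a_i\ge m_j$ yields $\sum_{i\in I}a_i\ge m_j-s_{I^c}$. Together with the trivial bound $\sum_{i\in I}a_i\ge 0$ this produces $\mu\ge\max\{m_j-s_{I^c},0\}$.

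For the equality when $r\le 2$ I would exhibit a single monomial in $V(\ls)$ realizing the bound. After a relabeling (trivial for $r=1$; for $r=2$ swap the two points if necessary) I assume $j=1$ and $q_2=([1:0],\dots,[1:0])$, so a monomial $\prod x_i^{a_i}y_i^{b_i}$ with $a_i+b_i=d_i$ lies in $V(\ls)$ iff $\sum a_i\ge m_1$ and, when $r=2$, $\sum b_i\ge m_2$. Non-emptiness forces $m_1+m_2\le\sum d_i$ by the previous corollary. I then split into two cases: if $m_1\le s_{I^c}$, set $a_i=0$ for $i\in I$ and distribute $m_1$ among the $a_i$, $i\in I^c$; if $m_1>s_{I^c}$, set $a_i=d_i$ for $i\in I^c$ and distribute $m_1-s_{I^c}\le s_I$ among the $a_i$, $i\in I$. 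In each case the complementary $b_i$'s can be chosen so that $\sum b_i\ge m_2$, the needed inequality reducing in every subcase to $m_1+m_2\le\sum d_i$.

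The only conceptual step is in the first paragraph — making precise that $\mu$ equals the vanishing order of a section along the smooth center $F_{j,I}$ in affine coordinates — and this is a standard property of blow-ups of smooth subvarieties. Everything else is combinatorial bookkeeping; the point worth emphasizing in the equality argument is that the non-emptiness bound $m_1+m_2\le\sum d_i$ is exactly what lets the two multiplicity conditions coexist on a single monomial of minimal $I$-degree.
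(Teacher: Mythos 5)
Your proof is correct and follows essentially the same route as the paper's: both reduce to the monomial basis of the space of sections, identify the order of vanishing of a monomial along $F_{j,I}$ with its $I$-degree $\sum_{i\in I}a_i$, deduce the lower bound from $\sum_i a_i\geq m_j$ and $a_i\leq d_i$, and for $r\leq 2$ exhibit an extremal monomial compatible with the condition at the second point via the non-emptiness inequality. The only cosmetic difference is that the paper phrases the vanishing order through the Cox ring of the blow-up along $F_{1,I}$ (the power of the exceptional variable $z$) rather than through affine coordinates, and your explicit case split on $m_1\lessgtr s_{I^c}$ spells out the attainment of the bound slightly more carefully than the paper does.
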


\begin{proof}
Without loss of generality we can assume that $j=1$
and $I=\{1, \dots i \}$.
Let $\mathcal{M}:=\ls_{(d_1,\dots,d_{n})}(m_1,m_2)$.
The vector space $V(\ls)$ is a subspace
of $V(\mathcal{M})$ which admits
a monomial basis $\mathcal{B}(\mathcal{M})$
given in the proof of Theorem~\ref{fdim}.
The Cox ring of the blow-up 
$\pi_{1,I}\colon X_{1,I}\rightarrow (\pp^1)^n$
of $(\pp^1)^n$ at $F_{1,I}$ is 
isomorphic to $\kk[zx_1, y_1, \dots,
zx_i, y_i , x_{i+1}, y_{i+1}, \dots, x_n, y_n]$,
where $z$ corresponds to 
the exceptional divisor.
Let $\mathcal{B}'(\mathcal{M})$ be the pull-back 
of the basis $\mathcal{B}(\mathcal{M})$ 
via $\pi_{1,I}$.
Then the basis $\mathcal{B}'(\mathcal{M})$ consists 
of the following monomials
\[
 \prod_{j=1}^i (zx_j)^{a_j}(y_j)^{b_j}
 \prod_{j=i+1}^n x_j^{a_j}y_j^{b_j},
\]
where $\sum_{i=1}^n a_i \geq m_1, \sum_{i=1}^n b_i\geq m_2$
 and $a_i+b_i=d_i$ for each $i$.
Observe that $\sum_{j=1}^i a_j
\geq m_1-\sum_{j=i+1}^n a_j\geq m_1-s_{I^c}$,
with equalities when $b_j=0$ for any $j\in \{ i+1, \dots , n\}$
and $\sum_{i=1}^n a_i =m_1$.
Thus $z^{m_1-s_{I^c}}$ divides any monomial in 
$\mathcal{B}'(\mathcal{M})$ and this is the maximal
power with this property when $\mathcal{M}=\ls$, 
i.e. when $r\leq 2$.
\end{proof}

\subsection{Base Locus of the Linear System}

In this subsection we describe a class of subvarieties
contained in the base locus $\Bs(\ls)$ of a linear system 
$\ls$ of the form $\mathcal{L}_{(d_1, \dots, d_n)}(m_1, \dots , m_r)$ 
of $(\pp^1)^n$ and compute their multiplicity in $\Bs(\ls)$.
By the generality assumption on the
position of the points we can assume all but the first two
of them to be contained in the $n$-dimensional
torus $\mathbb T^n$ of $(\pp^1)^n$. Denote by $p_{n+i-1} = \phi^{-1}(q_i)$,
for $3\leq i\leq r$, where $\phi\colon\pp^n\to (\pp^1)^n$
is the birational map defined in Remark~\ref{toric}.
Given two subsets $I\subseteq\{1,\dots,n\}$ and
$J\subseteq\{2,\dots,r\}$ we denote by $L_{IJ}$
the following linear subspace of $\pp^n$:
\[
 L_{IJ} = \langle\{e_i : i\in I\}\cup\{p_j : j\in J\}\rangle.
\]
We denote by $V_{IJ}$  the closure in $(\pp^1)^n$
of $\phi(L_{IJ} \cap\mathbb T^n)$.
Observe that if $L_{IJ}$ is defined by a matrix 
$A\in M_{k\times n}(\cc)$, with $k = |I| + |J| - 1$,
then $V_{IJ}\cap\phi(\mathbb T^n)$ is defined by the following
equations
\[
A \begin{bmatrix} 
y_1y_2\dots y_{n-1}x_n\\
y_1y_2\dots x_{n-1}y_n\\
\vdots \\
x_1y_2\dots y_{n-1}y_n\\
y_1y_2\dots y_{n-1}y_n\\
\end{bmatrix} =
\begin{bmatrix} 
0\\
0\\
\vdots \\
0\\
0\\
\end{bmatrix}.
\]

\begin{proposition}
\label{VIJ}
Let $\ls=\mathcal{L}_{(d_1, \dots ,d_n)}(m_1, \dots ,m_r)$
be a non-empty linear system and let $V_{IJ}$ be as above.
Then $V_{IJ}$ is contained in $\Bs(\ls)$ with multiplicity
\[
\mu_{IJ} := \max \left\{ 0 , (|J|-1)(m_1-\delta) 
+ \sum_{i\in J}m_i 
- \sum_{i\in I} d_i \right\},
\]
where we denote by $\delta=\sum_{i=1}^n d_i$.
\end{proposition}

\begin{proof}
The multiplicity of $V_{IJ}$ in the base locus of 
$\ls$ equals the multiplicity of
$L_{IJ}$ in the base locus of $\phi^*(\ls)$.
By~\eqref{isometry} the class
of an element of $\phi^*(\ls)$ is
\[
\left( \delta  -m_1 \right)H - \sum_{i=1}^n \left( \delta -m_1-d_{n-i} \right)E_i
-\sum_{i=2}^r m_{n+i-1} E_{n+i-1}.
\]
Thus we conclude by~\cite{BDP}*{Proposition 2.5}.
\end{proof}

As a  consequence of the fact that the base locus
of a linear system through $n+2$ points in $\pp^n$ is
a union of linear subspaces
~\cite{BDP}*{Corollary 4.8} we immediately get
the following.

\begin{corollary}\label{3pts}
If $\mathcal{L}_{(d_1, \dots , d_n)}(m_1, m_2, m_3)$
is a linear system through three points
in general position, then its base locus only 
contains varieties of the form $F_{I,j}$ and $V_{IJ}$. 
\end{corollary}

\section{ Degeneration of $(\pp^1)^n$}\label{dg}

In this section we use the degeneration in $(\pp^1)^n$
introduced in ~\cite[Section 3]{LP} and using a method 
introduced in ~\cite{Dum} we prove a Theorem
that allows to check non-speciality of a linear system in $(\pp^1)^n$
via this degeneration.

Recall that $\Delta(m+1)\subseteq\zz_{\geq 0}^n$,
for $m\geq 0$, the set of integer points of the $n$-dimensional
simplex which is the convex hull of
the points: $0,me_1,\dots,me_n$.

$\ls_{(d_1, \dots , d_{n-1},k\rightarrow d_n)}(m_1, \dots , m_r)$
will denote the sublinear system of  $\ls$
defined by all the polynomials divisible by $x_n^k$.
We denote by $V_A(\ls)$ the subvector space
of $\kk[x_1,\dots,x_n]$ obtained by evaluating
the polynomials of $V(\ls)$ at $y_1=\dots=y_n=1$.
Observe that $V_A(\ls)$ is the subvector space
of polynomials $f\in \kk[x_1, \dots , x_n]$ 
of degree at most $(d_1, \dots , d_n)$
such that 
$f$ has multiplicity at least $m_j$ at $p_j$
for any $j$.
Let $V:=V_A(\ls_{(d_1, \dots , d_n)})$
and let
\[
 \Phi \colon V \rightarrow \kk^N
\]
be the function which maps $f$ into the collection of 
all partial derivatives of $f$, which correspond 
to the integer points of the polytope 
$\Delta(m_i)\cap\prod_{i=1}^n[0,d_i]$ 
evaluated at $p_i$ for each $i$
(see also the proof
of Theorem~\ref{teo:fdim}).
Let $M(\ls)$ be the matrix of $\Phi$ with
respect to the monomial basis of $V$
and the standard basis of $\kk^N$.
The columns $M(\ls)$ are indexed by monomials of 
degree at most $(d_1, \dots , d_n)$, while
rows are indexed by conditions imposed by the points.
Let $P=\kk[p^1_1, \dots , p^n_1, \dots , p_r^1, \dots , p_r^n]$,
where $p^i_k$ is the $i$-coordinate of the $k$-th point.
Then the entries of $M(\ls)$ can be considered as polynomials in $P$.
Let $s$ be a positive integer $\leq r$, let
$\deg$ be a grading on $P$ defined by
$\deg(p_j^k)=1$ if $k=n$ and $j\geq s+1$
and $\deg(p_j^k)=0$ otherwise.
In what follows we will adopt the following
notation:

\begin{equation}
\small
\label{l12}
 \ls_1:=\ls_{(d_1, \dots , d_{n-1}, k-1)}  (m_1, \dots , m_s)
 \quad
 \ls_2:=\ls_{(d_1, \dots , d_{n-1}, d_n-k)} ( m_{s+1}, \dots, m_r).
\end{equation}

\begin{theorem}\label{dgt}
Let $\ls_1$ and $\ls_2$ be defined as in~\eqref{l12}.
Assume that the following conditions hold:
\begin{enumerate}
\item
$\ls_1$, $\ls_2$ are fiber non-special
with $(\fdim (\ls_1) +1)(\fdim (\ls_2)+1)\geq 0$,
\item
$m_i\leq k$, for any $i\in\{1,\dots,s\}$,
\item $m_j\leq d_n-k+1$ for any $j\in\{s+1,\dots,n\}$.
\end{enumerate}
Then the system $\ls:=\ls_{(d_1, \dots , d_n)}(m_1, \dots , m_r)$
is fiber non-special.
\end{theorem}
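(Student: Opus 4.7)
The plan is to prove non-speciality via a linear-algebraic incarnation of the degeneration of $(\pp^1)^n$ from~\cite{LP}. Geometrically, sending the last coordinates of $q_{s+1},\dots,q_r$ to infinity (which is the effect of the grading $\deg$) degenerates $(\pp^1)^n$ into two copies meeting along a fiber of the projection onto the last factor, with $q_1,\dots,q_s$ on one component and $q_{s+1},\dots,q_r$ on the other. Algebraically, I would exhibit a maximal square submatrix of $M(\ls)$ whose $\deg$-leading form has non-zero determinant, by combining maximal non-vanishing submatrices of $M(\ls_1)$ and $M(\ls_2)$ into a block-diagonal configuration.

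More concretely, partition the columns of $M(\ls)$ into $B_1$, indexed by monomials of $x_n$-degree at most $k-1$, and $B_2$, indexed by monomials of $x_n$-degree at least $k$; partition the rows according to whether the derivative condition is imposed at a point of the first group $\{q_1,\dots,q_s\}$ or of the second group $\{q_{s+1},\dots,q_r\}$. Hypotheses (2) and (3) are used precisely to force the two off-diagonal blocks of the $\deg$-leading form to vanish: a derivative of order $\leq m_i\leq k$ at $q_i$ for $i\leq s$ cannot contribute a top-$\deg$ term against a column in $B_2$, and the dual bound $m_j\leq d_n-k+1$ kills the analogous contribution from second-group rows against $B_1$ once one expands each entry in powers of the shifted coordinate $x_n-p_j^n$. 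The diagonal blocks then agree with $M(\ls_1)$ and $M(\ls_2)$ up to a nonzero rescaling.

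Since $\ls_1$ and $\ls_2$ are fiber non-special, these diagonal blocks have the ranks predicted by their fiber dimensions. The condition $(\fdim(\ls_1)+1)(\fdim(\ls_2)+1)\geq 0$ excludes the mixed case where one block is identically zero while the other has strictly positive expected dimension, ensuring that the two ranks combine additively to the maximal rank expected of $M(\ls)$. Consequently $\dim V(\ls)=\dim V(\ls_1)+\dim V(\ls_2)$, and an inclusion-exclusion computation that partitions the index sets $I\subseteq\{1,\dots,n\}$ appearing in the formula for $\fdim(\ls)$ according to whether $n\in I$, combined with Lemma~\ref{eqq}, identifies this sum with $\efdim(\ls)+1$.

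The main technical obstacle will be the block-diagonality of the $\deg$-leading form of $M(\ls)$: one must carefully track how each partial derivative of order up to $m_i$ evaluated at the limit of a point acts on monomials of prescribed $x_n$-degree, and verify that the bounds $m_i\leq k$ and $m_j\leq d_n-k+1$ are exactly the thresholds that prevent cross-block contributions from reaching top order in $\deg$. A secondary, more routine difficulty is the combinatorial identity equating $\fdim(\ls_1)+\fdim(\ls_2)+1$ with $\fdim(\ls)$; this follows by extending Lemma~\ref{eqq} and requires careful bookkeeping in the inclusion-exclusion sum, in particular handling those subsets $I$ for which $m_i-S_I$ becomes negative when moving from $\ls$ to $\ls_1$ or $\ls_2$.
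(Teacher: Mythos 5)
Your overall architecture is the same as the paper's: split $M(\ls)$ into blocks according to the $x_n$-degree of the column monomials and the two groups of points, identify the diagonal blocks with $M(\ls_1)$ and $M(\ls_2)$, show that the top $\deg$-term of a suitable maximal minor is $\det(M_1')\det(M_2')\neq 0$, and finish with the additivity $\fdim(\ls)=\fdim(\ls_1)+\fdim(\ls_2)+1$ (which you rightly identify as a needed combinatorial step; the paper leaves it implicit). However, the mechanism you propose for the crucial step is wrong, and you leave it as an ``obstacle'' rather than resolving it. The off-diagonal block $K_1$ (conditions at $q_1,\dots,q_s$ against columns of $x_n$-degree $\geq k$) has entries involving only the coordinates of $p_1,\dots,p_s$, all of $\deg=0$: these entries have exactly the same degree as those of $M_1$ and do not vanish in any $\deg$-leading form, so no bound on the $m_i$ can kill them; likewise the trick of expanding in powers of $x_n-p_j^n$ is a point-dependent column operation that cannot be performed simultaneously for several points $p_{s+1},\dots,p_r$. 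The correct argument (the paper's, via~\cite{Dum}) never needs the off-diagonal blocks to vanish: expand $\det(M')$ by Laplace along the first $\rho$ rows; every $\rho\times\rho$ minor of $[M_1'\ K_1']$ has degree $0$, while among the complementary minors of $[K_2'\ M_2']$ the minor $\det(M_2')$ has strictly maximal degree by \cite[Lemma~2]{Dum}, since a column of $x_n$-degree $\beta_n$ contributes $\beta_n-\alpha_n$ to the degree and the columns of $M_2'$ are precisely those with $\beta_n\geq k$. Hence $\det(M_1')\det(M_2')$ is the unique term of top degree and cannot be cancelled. Without this comparison of complementary minors, your block-diagonality claim fails as stated.

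You also misattribute the role of hypotheses (2) and (3). They are not there to suppress cross-block contributions but to make the identifications $M_1=M(\ls_1)$ and $M_2\cong M(\ls_2)$ legitimate: the rows of $M(\ls)$ attached to $p_i$ are indexed by the lattice points of $\Delta(m_i)\cap\prod_j[0,d_j]$, and $m_i\leq k$ for $i\leq s$ guarantees that every such derivative has $x_n$-order at most $k-1$, so the conditions imposed at $p_i$ in $M(\ls)$ coincide with those of $M(\ls_1)$; similarly $m_j\leq d_n-k+1$ matches the second group with $M(\ls_2)$ via the isomorphism $\Psi$ onto polynomials divisible by $x_n^k$. (The same bounds are what make the terms with $n\in I$ drop out of the fiber-dimension sums, so they do double duty in your additivity identity.) Finally, note that fiber non-speciality gives $\operatorname{rank}M(\ls_i)$ equal to the value predicted by $\fdim(\ls_i)$, not full rank in the naive sense; this is still enough to conclude $\dim V(\ls)\leq\fdim(\ls)+1$ once the leading-term argument is in place, but the phrase ``maximal rank'' should be read in that corrected sense.
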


\begin{proof}
Observe that we have an isomorphism of
vector spaces
\[
 \Psi\colon
 V_A( \ls_2)
 \to
 V_A(\ls_{(d_1, \dots , d_{n-1},k\rightarrow d_n)}(m_{s+1}, \dots , m_r))
\]
where the multiplicities are imposed at the points
$p_{s+1}, \dots , p_r$ respectively.
After reordering the rows and the columns
of the matrix $M( \ls )$ we can assume that
its first 
$\gamma$ columns 
are indexed by monomials 
of degree at most $(d_1, \dots , d_{n-1}, k-1)$
and that its first $\rho$
rows are indexed by conditions imposed 
at the points $p_1, \dots , p_s$.
We write
\[ 
M(\ls) = \begin{bmatrix} 
M_1 & K_1 \\
K_2 & M_2 \\
\end{bmatrix},
\]
where $M_1$ is a $\rho\times\gamma$ matrix.
Observe that $M_1 = M(\ls_1 )$ 
and $M_2 \cong M( \ls_2 )$ via the
isomorphism $\Psi$. Moreover by conditions
$(2)$, $(3)$ and the fact that $\ls_1$,
$\ls_2$ are fiber non-special, we deduce
that both matrices have maximal rank.
Assume now that
$\fdim (\ls_1) \geq -1$ and $\fdim (\ls_2) \geq -1$
(the other case being analysed in
a similar way).
Choose two submatrices $M_i'$ of
$M_i$ of maximal rank, for $i\in\{1,2\}$,
and form the square submatrix of $M(\ls)$
\[ M' = \begin{bmatrix} 
M'_1 & K'_1 \\
K'_2 & M'_2 \\
\end{bmatrix}
\]
where $K_1'$ is obtained from 
$K_1$ by deleting columns of $M_2$
and similarly for $K_2'$.
By ~\cite[Lemma 2]{Dum}
we have that $\deg(\det(M'_2))>\deg(\det(B))$
for any square submatrix $B$
of $[K'_2\ \ M'_2]$.
Thus, by the Laplace expansion
with respect to the first $\rho$ rows 
we conclude that $\deg (\det ( M' )) =
\deg(\det(M'_1)\cdot\det(M'_2))>0$ and
the result follows.
\end{proof}

The following algorithm 
is a recursive program that uses
Theorem~\ref{dgt} and Theorem~\ref{fdim}
in order to conclude if the given
linear system is non-special. \\

\begin{algorithm}[H]\label{alg2}
 \KwIn{$(d,m)\in\nn^n\times\nn^r$, with $r\geq 2$.}
 \KwOut{$x\in\{\text{non-special, undecided, special}\}$.}

 \uIf{$\std(d,m)=\emptyset$}{\KwRet{{\rm non-special}.}}
 \uElseIf{$\fdim(\std(d,m)) > \edim(d,m)$}{\KwRet{{\rm special}}\;}
 \Else{$(d,m):=\std(d,m)$\;

 \eIf{ $r=2$}
 { \eIf{ $\fdim(d,m) \geq \edim (d,m) $ }{\KwRet{{\rm special}}\;}{\KwRet{{\rm non-special}}\; } }{
\For{ $k\in \{ 1, \dots , d_1-1\}$, $s\in \{ 1, \dots , r-1\}$ }{
$d' := (k-1, d_2, \dots ,d_n ) $, $m':=(m_1,\dots, m_s)$\;
$d'':=(d_1-k , d_2, \dots, d_n) $, $m'':=(m_{s+1}, \dots , m_r)$\;
\If{ 
$\seep(d',m')=${\rm non-special} and $\seep(d'',m'')=${\rm non-special}\\
and $(\fdim(d',m')+1)(\fdim(d'',m'')+1)\geq 0$\\
and $m_i\leq k$ for any $i\in\{1,\dots,s\}$\\
and $m_j\leq d_n-k$ for any $j\in\{s+1,\dots,r\}$\\
}{\KwRet{{\rm non-special}}\;}
}
\KwRet{{\rm undecided}}\;
}
}
\caption{Speciality by degeneration.}
\end{algorithm}

\section{ Examples and conclusions}\label{ec}

We have studied linear systems of $(\pp^1)^n$
passing through points in 
very general position
and concluded that the
fibers of the projections $(\pp^1)^n\to (\pp^1)^k$,
for $1\leq k < n$, can contribute to the speciality.
The following is an example
of a fiber special linear system $\ls$
whose standard form $\ls'$ is 
fiber non-special.

\begin{example}\label{ex1}
The linear system $\ls:=\ls_{(13,9,5)}(11^2,7^2,3^2)$
of $(\pp^1)^3$ is not in standard form with
\[ \vdim(\ls)= 12^2\cdot 8^2\cdot 4^2 - 2\left( 
\binom{13}{3} + \binom{9}{3} + \binom{5}{3} \right)=80 \qquad \fdim(\ls)=154. \]
Using Algorithm~\ref{alg1}
we obtain the following linear systems
\[
 \ls_{(13,9,5)}(11^2,7^2,3^2)
 \leadsto
 \ls_{(5,9,5)}(7^2,3^4)
 \leadsto 
 \ls_{(5,5,5)}(3^6) =:\ls'
\]
where $\ls'$ is in standard form.
Algorithm~\ref{alg2} degenerates $\ls'$
according to the following scheme:
\[
 \xymatrix@R=-5pt{
  & & \ls_{(5,2,2)}(3^2)\\
  & \ls_{(5,5,2)}(3^3)\ar[ru]\ar[rd]\\
  & & \ls_{(5,2,2)}(3)\\
  \ls'\ar[ruu]\ar[rdd] & & \\
  & & \ls_{(5,2,2)}(3^2)\\
  & \ls_{(5,5,2)}(3^3)\ar[ru]\ar[rd]\\
  & & \ls_{(5,2,2)}(3)
 }
\]
By Theorem~\ref{fdim} the last four 
linear systems are non-special,
thus by repeated applications
of Theorem~\ref{dgt} we conclude
that $\ls'$ is non-special as well.
In particular $\dim(\ls)=\dim(\ls')
=\vdim(\ls')=156$.
\end{example}

The following example shows that
there are other varieties giving contribution
to the speciality of the linear system
already when we blow-up three
points in very general position.

\begin{example}\label{sp3}
The linear system 
$\ls := \ls_{(1,1,1,1,1,1,1)}(3^3)$
of $(\pp^1)^7$ is in standard form
with
 \[ \vdim(\ls)=2^7-3\binom{9}{7} = 20 \qquad \fdim(\ls)= \vdim(\ls)+21 =41,\]
where the contribution 
on the right is given by the $21$
one-dimensional fibers on the base locus,
but we have that $\dim(\ls)=42$,
then $\ls$ is fiber-special. 
Observe that Algorithm~\ref{alg2}
returns {\em undecided} in this case
since every degeneration gives a
special linear system.
The dimension of $\ls$ 
can be calculated by evaluating directly
the rank of the matrix $M(\ls)$, appearing
in the proof of Theorem~\ref{dgt}. 
By Corollary~\ref{3pts}
the base locus of $\ls$ is the union 
of all the fibers through 
each of the three points plus 
the irreducible surfaces $V_{IJ}$ 
for $J=\{2,3\}$ and $I=\{i\}\subseteq \{1,\dots,7\}$,
plus the curve $C = V_{\emptyset,\{2,3\}}$.
By Proposition~\ref{VIJ} each $V_{IJ}$ is contained
in the base locus of $\ls$ with multiplicity $1$ and
$C$ is contained with multiplicity $2$.
Moreover the equality
\[
 \dim(\ls) = \fdim(\ls) + 1
\]
suggests that $C$ is contributing to the speciality
of $\ls$.
\end{example} 

\begin{remark}
Observe that the strict inequality $\dim(\ls) > \efdim(\ls)$
can hold also in the simple case when all the
multiplicities equal $2$. For instance the linear
system $\ls = \ls_{(2,2,2)}(2^7)$ is special of dimension
$0$ and $\efdim(\ls) = -1$. The subvariety of
$(\pp^1)^3$ which produces the speciality
is the unique surface of the linear system
$\ls_{(1,1,1)}(1^7)$. For a complete classification
of the base loci of special linear systems through
double points of $(\pp^1)^n$ see~\cite[Section 7]{LP}.
\end{remark}

Denote, as before, by $Y$ the blow-up of 
$(\pp^1)^3$ at $r$ points in very general position
and by $\phi\colon\pp^3\to (\pp^1)^3$ the birational
map defined in Remark~\ref{toric}.
Let $Q$ be a divisor in the strict
transform of the linear system
$\ls_{(1,1,1)}(1^7)$ which 
is the image via $\phi^*$ of the class of the
strict transform of the quadric through $9$ points
of $\pp^3$.
For any divisor $D$ in the strict transform of
$\ls_{(d_1,d_2,d_3)}(m_1,\dots,m_7)$
let 
\[ q(D):=\chi(D|_Q)=(d_1+1)(d_2+1)(d_3+1) -d_1d_2d_3 - \sum_{i=1}^7 \dfrac{ m_i(m_i+1)}{2}.\]

The following conjecture
is equivalent to ~\cite[Conjecture 6.3]{LU1}
via the small modification $\phi$.

\begin{conjecture}
Let $\ls:=\ls_{(d_1,d_2,d_3)}(m_1,\dots,m_r)$
be a linear system in standard form and
let $D$ be a divisor in its strict transform.
\begin{itemize}
\item If $q(D)\leq 0$, then $h^0(D)=h^0(D-Q)$.
\item If $q(D)>0$, then $D$ is special if and only if $m_1>d_n+1$ 
and $D$ is fiber non-special.
\end{itemize}
\end{conjecture}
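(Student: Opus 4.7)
The plan is to establish the equivalence with \cite[Conjecture 6.3]{LU1} asserted just before the statement, via the small modification $\phi\colon X\dashrightarrow Y$ of Remark \ref{toric}. The pushforward $\phi_*$ is a lattice isometry on Picard groups (Remark \ref{iii}), preserves $h^0$, and sends standard-form classes to standard-form classes (remark following Definition \ref{def4.1}). The strategy is therefore to check that each ingredient in the statement transports correctly across $\phi_*$ to its counterpart in \cite{LU1}, making the two conjectures tautologically equivalent.

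First, I would verify that $Q$ is the $\phi_*$-image of the strict transform of a quadric through $9$ points on $\pp^3$. A direct computation using \eqref{isometry} with $n=3$ gives
\begin{equation*}
\phi_*\!\left(2H-\sum_{i=1}^{9}E_i\right)=H_1+H_2+H_3-\sum_{i=1}^{7}E_i,
\end{equation*}
which matches the class of $Q$ described in the statement. Next, $q(D)=\chi(D|_Q)$ expands via Riemann-Roch on the surface $Q$ into a polynomial in the intersection numbers $D\cdot Q$, $Q^2$, $K_Y\cdot Q$ and in $\chi(\mathcal{O}_Q)$, all of which are preserved by $\phi_*$. Hence both the dichotomy $q(D)\leq 0$ versus $q(D)>0$ and the identity $h^0(D)=h^0(D-Q)$ transport directly to the corresponding statements on $X$.

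The main obstacle is to match the notion of \emph{fiber non-special} on $Y$ with the non-speciality condition used in \cite[Conjecture 6.3]{LU1}. One must trace the fibers $F_{j,I}$ of the projections $P_I\colon\pn\to(\pp^1)^{|I|}$ through $\phi$ and identify their strict transforms on $X$ with the linear subspaces through subsets of the base points that appear in the speciality analysis of \cite{LU1}, verifying that the base-locus multiplicities (as computed for two-point systems in the base-locus proposition above) agree on both sides. The standard-form hypothesis and the condition $m_1>d_n+1$ are invariant under $\phi_*$ by construction, so once the non-speciality notions are reconciled the equivalence of the two conjectures follows. Example \ref{sp3} provides a concrete sanity check: there $\phi^{-1}(S)$ does not contribute to the speciality of $\ls'$ on $\pp^3$, consistent with the required matching of contributions on the two sides of $\phi$.
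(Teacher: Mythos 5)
There is a fundamental mismatch here: the statement you are asked to prove is a \emph{conjecture}, and the paper offers no proof of it --- it only remarks, without argument, that it is equivalent to \cite[Conjecture 6.3]{LU1} via the small modification $\phi$. Your proposal is an outline of that equivalence, not a proof of the statement. Even if you carried out every step of your plan, you would have shown only that the conjecture on $(\pp^1)^3$ holds if and only if \cite[Conjecture 6.3]{LU1} holds; since the latter is itself open, this does not establish the statement. So as a proof of the conjecture the proposal cannot succeed, whatever its merits as a verification of the paper's parenthetical equivalence claim.

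Within the narrower goal of establishing the equivalence, your computation of $\phi_*(2H-\sum_{i=1}^{9}E_i)=H_1+H_2+H_3-\sum_{i=1}^{7}E_i$ is correct, and the invariance of $q(D)$, of the standard-form condition, and of $h^0$ under $\phi_*$ are all sound and follow from Remark~\ref{iii}. But the step you yourself flag as ``the main obstacle'' --- reconciling \emph{fiber non-special} on $Y$ with the speciality notion used in \cite{LU1} --- is exactly where the content lies, and you leave it unresolved. The paper's own Example~\ref{sp3} shows this correspondence is genuinely delicate: there a surface $S$ contributes to the fiber-speciality of $\ls$ on $(\pp^1)^7$ while $\phi^{-1}(S)$ contributes nothing to the speciality of the corresponding system on $\pp^7$, so base-locus contributions do not simply transport across $\phi$. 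Citing that example as a ``sanity check'' inverts its role; it is evidence that the matching you need requires a real argument, not a tautology. Without that argument the equivalence is not established, and without the equivalence being to a \emph{theorem} rather than a conjecture, the statement remains unproved.
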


\begin{example}
Let $\ls_n = \ls_{(n,n,n)}(n^7)$
be the linear system corresponding to the divisor
class $nQ\in \Pic(Y)$, where $n>0$.
This system has dimension $1$ for
any $n$ and it is non-special
for $n=1$. Its fiber dimension is 
\[ \fdim( \ls_n ) = \vdim (\ls_n) = n^3 - 7 \binom{ 2n-1}{ n } < 0,\]
so that $\ls_n$ is fiber-special for $n>1$.
It is easy to check that $q(nQ)= 0$ for any $n$
and that in this case the conjecture holds.
\end{example}

\begin{bibdiv}
\begin{biblist}

\bib{BB}{article}{
   author={Ballico, Edoardo},
   author={Brambilla, Maria Chiara},
   title={Postulation of general quartuple fat point schemes in ${\bf P}^3$},
   journal={J. Pure Appl. Algebra},
   volume={213},
   date={2009},
   number={6},
   pages={1002--1012},
   issn={0022-4049},
   review={\MR{2498792 (2010b:14106)}},
   doi={10.1016/j.jpaa.2008.11.001},
}

\bib{BBCS}{article}{
   author={Ballico, E.},
   author={Brambilla, M. C.},
   author={Caruso, F.},
   author={Sala, M.},
   title={Postulation of general quintuple fat point schemes in $\Bbb{P}^3$},
   journal={J. Algebra},
   volume={363},
   date={2012},
   pages={113--139},
   issn={0021-8693},
   review={\MR{2925849}},
   doi={10.1016/j.jalgebra.2012.03.022},
}

\bib{BDP}{article}{
AUTHOR = {Brambilla, Maria Chiara},
AUTHOR = {Dumitrescu, Olivia},
AUTHOR = {Postinghel, Elisa},
TITLE = {On a notion of speciality of linear systems in $\pp^n$},
    journal={Trans. Amer. Math. Soc.},
     YEAR = {2014},
   pages={1--27},
   doi={10.1090/S0002-9947-2014-06212-0},    
}

\bib{C}{article}{
   author={Ciliberto, Ciro},
   title={Geometric aspects of polynomial interpolation in more variables
   and of Waring's problem},
   conference={
      title={European Congress of Mathematics, Vol. I},
      address={Barcelona},
      date={2000},
   },
   book={
      series={Progr. Math.},
      volume={201},
      publisher={Birkh\"auser},
      place={Basel},
   },
   date={2001},
   pages={289--316},
   review={\MR{1905326 (2003i:14058)}},
}

\bib{CGG}{article}{
   author={Catalisano, M. V.},
   author={Geramita, A. V.},
   author={Gimigliano, A.},
   title={Higher secant varieties of Segre-Veronese varieties},
   conference={
      title={Projective varieties with unexpected properties},
   },
   book={
      publisher={Walter de Gruyter, Berlin},
   },
   date={2005},
   pages={81--107},
   review={\MR{2202248 (2007k:14109a)}},
}

\bib{CGG2}{article}{
   author={Catalisano, M. V.},
   author={Geramita, A. V.},
   author={Gimigliano, A.},
   title={Higher secant varieties of the Segre varieties $\Bbb P^1\times\dots\times\Bbb P^1$},
   journal={J. Pure Appl. Algebra},
   volume={201},
   date={2005},
   number={1-3},
   pages={367--380},
   issn={0022-4049},
   review={\MR{2158764 (2006d:14060)}},
   doi={10.1016/j.jpaa.2004.12.049},
}

\bib{CHMR}{article}{
 author={Ciliberto, Ciro},
 author={Harbourne, Brian},
 author={Miranda, Rick},
 author={Ro\'e, Joaquim},
  title={Variations on Nagata's Conjecture},
 JOURNAL = {arXiv:1202.0475 }
      YEAR = {2012},
  
}

\bib{CM1}{article}{
   author={Ciliberto, Ciro},
   author={Miranda, Rick},
   title={Degenerations of planar linear systems},
   journal={J. Reine Angew. Math.},
   volume={501},
   date={1998},
   pages={191--220},
   issn={0075-4102},
   review={\MR{1637857 (2000m:14005)}},
}

\bib{CM2}{article}{
   author={Ciliberto, Ciro},
   author={Miranda, Rick},
   title={Linear systems of plane curves with base points of equal
   multiplicity},
   journal={Trans. Amer. Math. Soc.},
   volume={352},
   date={2000},
   number={9},
   pages={4037--4050},
   issn={0002-9947},
   review={\MR{1637062 (2000m:14006)}},
   doi={10.1090/S0002-9947-00-02416-8},
}

\bib{TC}{article}{
   author={Castravet, Ana-Maria},
   author={Tevelev, Jenia},
   title={Hilbert's 14th problem and Cox rings},
   journal={Compos. Math.},
   volume={142},
   date={2006},
   number={6},
   pages={1479--1498},
   issn={0010-437X},
   review={\MR{2278756 (2007i:14044)}},
   doi={10.1112/S0010437X06002284},
}

\bib{Dum}{article}{
   author={Dumnicki, Marcin},
   title={An algorithm to bound the regularity and nonemptiness of linear
   systems in $\Bbb P^n$},
   journal={J. Symbolic Comput.},
   volume={44},
   date={2009},
   number={10},
   pages={1448--1462},
   issn={0747-7171},
   review={\MR{2543429 (2010i:14108)}},
   doi={10.1016/j.jsc.2009.04.005},
}

\bib{Gi}{book}{
   author={Gimigliano, Alessandro},
   title={On linear systems of plane curves},
   note={Thesis (Ph.D.)--Queen's University (Canada)},
   publisher={ProQuest LLC, Ann Arbor, MI},
   date={1987},
   pages={(no paging)},
   isbn={978-0315-38458-3},
   review={\MR{2635606}},
}

\bib{Ha}{article}{
   author={Harbourne, Brian},
   title={Complete linear systems on rational surfaces},
   journal={Trans. Amer. Math. Soc.},
   volume={289},
   date={1985},
   number={1},
   pages={213--226},
   issn={0002-9947},
   review={\MR{779061 (86h:14030)}},
   doi={10.2307/1999697},
}

\bib{Hi}{article}{
   author={Hirschowitz, Andr{\'e}},
   title={Une conjecture pour la cohomologie des diviseurs sur les surfaces
   rationnelles g\'en\'eriques},
   language={French},
   journal={J. Reine Angew. Math.},
   volume={397},
   date={1989},
   pages={208--213},
   issn={0075-4102},
   review={\MR{993223 (90g:14021)}},
   doi={10.1515/crll.1989.397.208},
}

\bib{La}{article}{
    AUTHOR = {Laface, Antonio},
     TITLE = {On linear systems of curves on rational scrolls},
   JOURNAL = {Geom. Dedicata},
  FJOURNAL = {Geometriae Dedicata},
    VOLUME = {90},
      YEAR = {2002},
     PAGES = {127--144},
      ISSN = {0046-5755},
     CODEN = {GEMDAT},
   MRCLASS = {14C20 (14J25)},
  MRNUMBER = {1898157 (2003b:14011)},
MRREVIEWER = {C{\'{\i}}cero Fernandes de Carvalho},
       DOI = {10.1023/A:1014958409472},
       URL = {http://dx.doi.org/10.1023/A:1014958409472},
}

\bib{Le}{article}{
   author={Laurent, Evain},
   title={La fonction de Hilbert de la r\'eunion de $4^h$ gros points
   g\'en\'eriques de ${\bf P}^2$ de m\^eme multiplicit\'e},
   language={French, with French summary},
   journal={J. Algebraic Geom.},
   volume={8},
   date={1999},
   number={4},
   pages={787--796},
   issn={1056-3911},
   review={\MR{1703614 (2000e:13023)}},
}

\bib{LP}{article}{
   author={Laface, Antonio},
   author={Postinghel, Elisa},
   title={Secant varieties of Segre-Veronese embeddings of $(\Bbb{P}^1)^r$},
   journal={Math. Ann.},
   volume={356},
   date={2013},
   number={4},
   pages={1455--1470},
   issn={0025-5831},
   review={\MR{3072808}},
   doi={10.1007/s00208-012-0890-1},
}

\bib{LU2}{article}{
   author={Laface, Antonio},
   author={Ugaglia, Luca},
   title={On a class of special linear systems of $\Bbb P^3$},
   journal={Trans. Amer. Math. Soc.},
   volume={358},
   date={2006},
   number={12},
   pages={5485--5500 (electronic)},
   issn={0002-9947},
   review={\MR{2238923 (2007e:14009)}},
   doi={10.1090/S0002-9947-06-03891-8},
}

\bib{LU1}{article}{
   author={Laface, Antonio},
   author={Ugaglia, Luca},
   title={Standard classes on the blow-up of $\Bbb P^n$ at points in very
   general position},
   journal={Comm. Algebra},
   volume={40},
   date={2012},
   number={6},
   pages={2115--2129},
   issn={0092-7872},
   review={\MR{2945702}},
   doi={10.1080/00927872.2011.573517},
}

\bib{Mig}{article}{
   author={Mignon, Thierry},
   title={Syst\`emes de courbes planes \`a singularit\'es impos\'ees: le cas
   des multiplicit\'es inf\'erieures ou \'egales \`a quatre},
   language={French, with English summary},
   journal={J. Pure Appl. Algebra},
   volume={151},
   date={2000},
   number={2},
   pages={173--195},
   issn={0022-4049},
   review={\MR{1775572 (2001g:14048)}},
   doi={10.1016/S0022-4049(99)00054-7},
}

\bib{Muk}{article}{
   author={Mukai, Shigeru},
   title={Geometric realization of $T$-shaped root systems and
   counterexamples to Hilbert's fourteenth problem},
   conference={
      title={Algebraic transformation groups and algebraic varieties},
   },
   book={
      series={Encyclopaedia Math. Sci.},
      volume={132},
      publisher={Springer},
      place={Berlin},
   },
   date={2004},
   pages={123--129},
   review={\MR{2090672 (2005h:13008)}},
}

\bib{Se}{article}{
   author={Segre, Beniamino},
   title={Alcune questioni su insiemi finiti di punti in geometria
   algebrica. },
   language={Italian},
   conference={
      title={Atti Convegno Internaz. Geometria Algebrica},
      address={Torino},
      date={1961},
   },
   book={
      publisher={Rattero, Turin},
   },
   date={1962},
   pages={15--33},
   review={\MR{0146714 (26 \#4234)}},
}

\bib{St}{article}{
   author={Paul, Stepan},
   title={New methods for determining speciality of linear systems based at
   fat points in $\Bbb{P}^n$},
   journal={J. Pure Appl. Algebra},
   volume={217},
   date={2013},
   number={5},
   pages={927--945},
   issn={0022-4049},
   review={\MR{3003317}},
   doi={10.1016/j.jpaa.2012.09.019},
}

\bib{VT}{article}{
   author={Van Tuyl, Adam},
   title={An appendix to a paper of M. V. Catalisano, A. V. Geramita and A.
   Gimigliano. The Hilbert function of generic sets of 2-fat points in $\Bbb
   P^1\times\Bbb P^1$: ``Higher secant varieties of Segre-Veronese
   varieties'' [in {\it Projective varieties with unexpected properties},
   81--107, Walter de Gruyter GmbH \& Co. KG, Berlin, 2005; MR2202248]},
   conference={
      title={Projective varieties with unexpected properties},
   },
   book={
      publisher={Walter de Gruyter, Berlin},
   },
   date={2005},
   pages={109--112},
   review={\MR{2202249 (2007k:14109b)}},
}

\end{biblist}
\end{bibdiv}

\end{document}